\theoremstyle{definition}
\newtheorem{definition}{Definition}[section]
\theoremstyle{remark}
\newtheorem*{remark}{Remark}
\theoremstyle{theorem}
\newtheorem{theorem}[definition]{Theorem}
\theoremstyle{proposition}
\newtheorem{proposition}[definition]{Proposition}
\theoremstyle{definition}
\newtheorem{example}[definition]{Example}
\theoremstyle{corollary}
\newtheorem{corollary}[definition]{Corollary}
\theoremstyle{lemma}
\newtheorem{lemma}[definition]{Lemma}
\theoremstyle{definition}
\theoremstyle{definition}
\title{M-Representation of Polytopes}
\author{Sebastian Sigl and Matthias Althoff}
\date{\vspace{-5ex}}
\begin{document}

\maketitle

    

$\textbf{Abstract}$ We introduce the M-representation of polytopes, which makes it possible to compute linear transformations, convex hulls, and Minkowski sums with linear complexity in the dimension of the polytopes.
When the polytope is a convex hull of a zonotope and a polytope, the representation size can be smaller than any of the known representations (V-representation, H-representation, and Z-representation).
We also provide a variant of the M-representation: The chain representation is more compact and we can directly use it to compute linear transformations and convex hulls -- for all other operations on the chain representation, one requires a conversion to the M-representation.


\section{Introduction}

The two main representations for convex, bounded polytopes are the well known V-representation and H-representation \cite{Gruenbaum, Ziegler}. The first one represents a polytope by its vertices and the second one uses halfspaces. Recently, the novel Z-representation was introduced in \cite{Kochdumper}, which uses generators multiplied by monomials. The Z-representation is a special case of polynomial zonotopes \cite{Althoff}, which can represent non-convex sets.\\

The Z-representation overcomes several shortcomings of the conventional representations, out of which we provide a few examples: In case the matrix $M$ of a linear transformation of an H-representation is not invertible, the computational complexity of this transformation is exponential in the dimension $d$ \cite{Jones}. The complexity of calculating the Minkowski sum of two polytopes in H-representation is also exponential in $d$ \cite{Tiwary} and the computation of the convex hull of two H-representations is NP-hard \cite{Tiwary}. 
While the linear transformation of the V-representation is trivial, the computation of the Minkowski sum \cite{Gritzmann} and the convex hull \cite{Tiwary} of two polytopes in V-representation are exponential in $d$.
Contrary, the Z-representation has only a polynomial complexity for linear transformations, Minkowski sums and convex hulls with respect to the dimension $d$ \cite{Kochdumper}.\\

Let us have a look at related representation types, which are also surveyed in \cite{Althoffset}. The complexities are described in terms of the number of respective generators of these methods if not stated otherwise. Another representation for polytopes are zonotope bundles. This method presents polytopes as the intersection of a finite number of zonotopes \cite{Althoffzb}. An advantage of this method is that the intersection of two zonotope bundles can be found trivially, but neither the Minkowski sum, nor the convex hull have a closed-form expression \cite{Althoffset}.
Polynomial zonotopes \cite{Althoff} have a polynomial complexity 
for the Minkowski sum and the convex hull, but are not closed under intersections \cite{Althoffset}. Besides polynomial zonotopes there also exist constrained zonotopes \cite{Scott} and constrained polynomial zonotopes \cite{Kochdumpercpz}. These have additional constraints on the factors appearing in the definition of a (polynomial) zonotope. The Minkowski sum can be computed in linear complexity 
for constrained (polynomial) zonotopes. While convex hulls of constrained polynomial zonotopes can be computed with polynomial complexity \cite{Althoffset}, convex hulls of constrained zonotopes can be computed linearly in the number of generators and constraints on the zonotopes \cite{Koeln}. Another representation are support functions \cite{Ghosh}. Introduced by Minkowski it makes use of the supremum of an inner product. The Minkowski sum and the convex hull can be computed linearly
, but the computation of intersections of support functions is not solved yet \cite{Althoffset}. Spectrahedra are another way to represent convex sets. A spectrahedron is defined by the positive semi-definite values of a Hermitian linear matrix polynomial. Their representation by matrix polynomials makes them useful in linear programming \cite{Netzer}.\\

As the main result of this paper, we introduce the M-representation for polytopes. Its form is similar to the Z-representation, but we constrain the factors to positive values. This rather subtle change has significant implications and improves many characteristics of the Z-representation: So far, we can only find a Z-representation whose number of generators is quadratic in the number of vertices $n$. The M-representation on the other hand only needs at most as many basis vectors as the V-representation and an additional matrix of exponents. This matrix can be saved efficiently such that it vanishes in the complexity of the representation size of the basis vectors.
Furthermore, we introduce a strategy that uses zonotopes in order to decrease the number of basis vectors even further. Given $n$ vertices in $\mathbb{R}^d$ an M-representation can be computed in $\mathcal{O}(dn)$.\\

This paper is organized as follows: In Sec. \ref{preliminaries} we present preliminaries and continue in Sec. \ref{secsrep} by introducing the M-representation. We prove the complexities of different operations in Sec. \ref{operationssrep} and present an algorithm to reduce the number of basis vectors in Sec. \ref{algreducedsrep}. In Sec. \ref{repsize} we propose a variant that reduces the complexity of computations convex hulls.

\section{Preliminaries} \label{preliminaries}

\subsection{Notations}

In the remainder of this paper, we will use the following notations: $[n] = \{1, 2, \dots, n\}$ for $n \in \mathbb{N}$, the symbols $\mathbb{O}$ and $\mathbb{I}$ refer to the matrices filled with zeros and ones with proper dimensions. $I_n$ refers to the identity matrix in $\mathbb{R}^{n \times n}$, $L_n$ to the lower triangular matrix filled with ones in $\mathbb{R}^{n \times n}$ and $[ \textrm{ }]$ denotes the empty matrix. Given a matrix $M \in \mathbb{R}^{d \times d}$, $M_{(i,j)}$ represents the $j$-th entry of matrix row $i$, and $M_{(\cdot,j)}$ the $j$-th column. Furthermore, we will denote a set of the form $\{S(\alpha) \textrm{ } | \textrm{ } \alpha \in [0, 1]^p\}$ as $\{S(\alpha)\}_{\alpha}$.

\subsection{Definitions}

Now we provide some definitions that are important for the rest of the paper. In order to make the paper coherent, we will use the term \textit{polytope} instead of \textit{convex bounded polytope}. Let us first define the V-representation, and the H-representation of polytopes.

\begin{definition}[V-representation]
    Let $v_1, v_2, \dots, v_n \in \mathbb{R}^d$ be the vertices of a polytope $\mathcal{P}$. Then we can define the vertex representation as 
    \begin{equation}
        \mathcal{P} = \Big\{ \sum_{i=1}^n \alpha_i v_i \textrm{ } \Big| \textrm{ } \sum_{i=1}^n \alpha_i = 1, \alpha_i \geq 0 \Big\}.
    \end{equation}
    This representation therefore uses $n$ vectors.
\end{definition}

\begin{definition}[H-representation]
    Let $G \in \mathbb{R}^{h \times d}$ be a matrix and $b \in \mathbb{R}^h$ a vector. The halfspace representation of a polytope $\mathcal{P} \subseteq \mathbb{R}^d$ is
    \begin{equation}
        \mathcal{P}=\{x \in \mathbb{R}^d \textrm{ } | \textrm{ } Gx \leq b \}.
    \end{equation}
    This representation uses $h$ halfspaces.
\end{definition}








Let us now define the Z-representation of a polytope, which is a special case of a polynomial zonotope.

\begin{definition}[Z-representation] \label{defzrep}
Let $c \in \mathbb{R}^d$ be a center point, $G \in \mathbb{R}^{d \times h}$ a generator matrix, and $\mathcal{E} \in \{0, 1\}^{p \times h}$ an exponent matrix, then the Z-representation of a polytope $\mathcal{P} \subseteq \mathbb{R}^d$ is defined as

\begin{equation}
    \mathcal{P} = \Big\{c + \sum_{i=1}^h(\prod_{k=1}^p\alpha_k^{\mathcal{E}_{(k,i)}})G_{(\cdot,i)} \textrm{ } \Big| \textrm{ } \alpha \in [-1, 1]^p \Big\}
\end{equation}

and we write

\begin{equation}
    \mathcal{P} = \langle c, G, \mathcal{E} \rangle_Z.
\end{equation}

The Z-representation of a single point $c$ therefore can be expressed by $\langle c, [ \textrm{ }], [ \textrm{ }] \rangle_Z$. This representation uses $h$ generators.
\end{definition}

The Z-representation is not unique. For example, the polytope in Fig. \ref{exzono} can be represented by the following two sets $\mathcal{P}_1$ and $\mathcal{P}_2$ with $\mathcal{P}_1 = \mathcal{P}_2$.

\begin{equation} \label{exzonorep}
    \begin{split}
        \mathcal{P}_1 & = \Big\langle \begin{pmatrix}
        0\\
        0
        \end{pmatrix}, \Big[\begin{pmatrix}
        1\\
        0
        \end{pmatrix}, \begin{pmatrix}
        -1\\
        -1
        \end{pmatrix}\Big], \begin{pmatrix}
        1 0\\
        0 1
        \end{pmatrix} \Big\rangle_Z\\
        \mathcal{P}_2 & = \Big\langle \begin{pmatrix}
        0\\
        0
        \end{pmatrix}, \Big[\begin{pmatrix}
        -0.5\\
        -0.5
        \end{pmatrix}, \begin{pmatrix}
        -0.5\\
        -0.5
        \end{pmatrix}, \begin{pmatrix}
        0.5\\
        0
        \end{pmatrix}, \begin{pmatrix}
        -0.5\\
        -0.5
        \end{pmatrix}, \begin{pmatrix}
        0.5\\
        0.5
        \end{pmatrix}\Big], 
        \begin{pmatrix}
        1 0 0 1 0\\
        0 1 0 0 1\\
        0 0 1 1 1
        \end{pmatrix}
         \Big\rangle_Z
    \end{split}
\end{equation}


\begin{figure}[h]
 \centering
 \includegraphics[width=6cm, height=6cm]{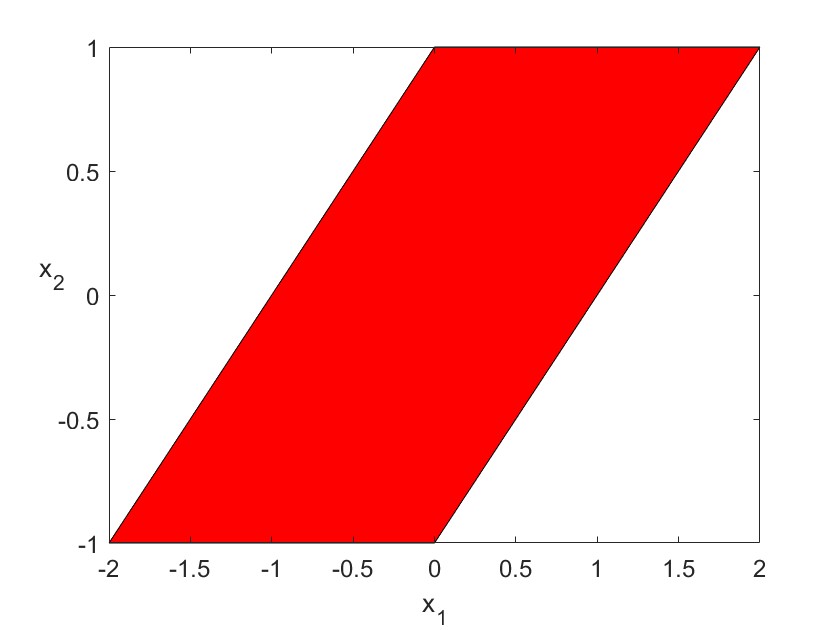}
 \caption{Plot of $\mathcal{P}_1 = \mathcal{P}_2$}
 \label{exzono}
\end{figure}


Let us now have a look at zonotopes. The set of zonotopes is a subset of the set of polytopes, we can therefore express zonotopes as a special case of polytopes in Z-representation:

\begin{definition}[Zonotope] \label{zonotope}
A \textit{zonotope} is a polytope in Z-representation with $\mathcal{E} = I_h$. 
\end{definition}


\begin{definition}[Convex hull, Minkowski sum]
Let $\mathcal{P}_1$ and $\mathcal{P}_2$ be two convex sets. Then we define the convex hull as

\begin{equation} \label{convzreptheory}
    conv(\mathcal{P}_1, \mathcal{P}_2) = \Big\{ \frac{1 + \alpha}{2} p_1 + \frac{1 - \alpha}{2} p_2 \textrm{ } \Big| \textrm{ } p_1 \in \mathcal{P}_1, p_2 \in \mathcal{P}_2, \alpha \in [-1, 1]\Big\}
\end{equation}

and the Minkowski sum as

\begin{equation}
    \mathcal{P}_1 \oplus \mathcal{P}_2 = \Big\{p_1 + p_2 \textrm{ } \Big| \textrm{ } p_1 \in \mathcal{P}_1, \textrm{ } p_2 \in \mathcal{P}_2 \Big\} .
\end{equation}
\end{definition}

The convex hull and the Minkowski sum of two polytopes can be computed in the following way:

\begin{proposition}[Convex hull, Minkowski sum, Linear transformation, \cite{Kochdumper}] \label{propconvmink}
Let $\mathcal{P}_1$ and $\mathcal{P}_2$ be two polytopes in Z-representation with $\mathcal{P}_i= \langle c_i, G_i, \mathcal{E}_i \rangle_Z$, $c_i \in \mathbb{R}^d$, $G_i \in \mathbb{R}^{d \times h_i}$ and $\mathcal{E}_i \in \{0, 1\}^{p_i \times h_i}$. Then their convex hull and Minkowski sum can be computed by

\begin{equation}
    \begin{split}
        conv(\mathcal{P}_1, \mathcal{P}_2) & = \Big\langle \frac{1}{2} (c_1 + c_2), \frac{1}{2} [(c_1 - c_2), G_1, G_1, G_2, -G_2], \hat{\mathcal{E}} \Big\rangle_Z\\
        \mathcal{P}_1 \oplus \mathcal{P}_2 & = \Big\langle c_1 + c_2, [G_1, G_2], \bar{\mathcal{E}} 
        \Big\rangle_Z
    \end{split}
\end{equation}

with
\begin{equation}
    \begin{split}
        \hat{\mathcal{E}} & = \begin{pmatrix}
        \mathbb{O} & \mathcal{E}_1 & \mathcal{E}_1 & \mathbb{O} & \mathbb{O}\\
        \mathbb{O} & \mathbb{O} & \mathbb{O} & \mathcal{E}_2 & \mathcal{E}_2\\
        1 & \mathbb{O} & \mathbb{I} & \mathbb{O} & \mathbb{I}
        \end{pmatrix}\\
        \bar{\mathcal{E}} & = \begin{pmatrix}
        \mathcal{E}_1 & \mathbb{O}\\
        \mathbb{O} & \mathcal{E}_2
        \end{pmatrix}\\
        p & = p_1 + p_2 + 1.
    \end{split}
\end{equation}

For the convex hull we have $h = 2h_1 + 2h_2 + 1$ generators and for the Minkowski sum $h = h_1 + h_2$ generators.\\
A linear transformation by $M \in \mathbb{R}^{m \times d}$ can be computed by 
\begin{equation}
    M \mathcal{P}_1 = \langle Mc_1, MG_1, \mathcal{E}_1 \rangle_Z.
\end{equation}
\end{proposition}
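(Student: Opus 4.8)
The plan is to read each of the three formulas as an identity between two sets that are both presented as the image of a box-shaped parameter domain under an explicit map, and to verify it by plugging in generic elements; since the formulas themselves are already given, the task is pure verification and the only real work is the bookkeeping of factors and exponent-matrix columns. Throughout I would write $m_i^{(1)} = \prod_{k=1}^{p_1}\alpha_k^{(\mathcal{E}_1)_{(k,i)}}$ and $m_j^{(2)} = \prod_{l=1}^{p_2}\beta_l^{(\mathcal{E}_2)_{(l,j)}}$ for the monomials attached to the $i$-th generator of $\mathcal{P}_1$ and the $j$-th generator of $\mathcal{P}_2$, and use the convention $t^0 = 1$ that is implicit in Definition~\ref{defzrep}.

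First I would dispatch the linear transformation: applying $M$ to a generic point $c_1 + \sum_{i=1}^{h_1} m_i^{(1)} G_{1(\cdot,i)}$ of $\mathcal{P}_1$ and pulling $M$ through the finite sum gives $Mc_1 + \sum_{i=1}^{h_1} m_i^{(1)} (MG_1)_{(\cdot,i)}$ over the unchanged domain $\alpha \in [-1,1]^{p_1}$, which is by definition $\langle Mc_1, MG_1, \mathcal{E}_1 \rangle_Z$. For the Minkowski sum the point to make is that the factor vectors of the two summands are independent, so a generic element of $\mathcal{P}_1 \oplus \mathcal{P}_2$ equals $(c_1+c_2) + \sum_i m_i^{(1)} G_{1(\cdot,i)} + \sum_j m_j^{(2)} G_{2(\cdot,j)}$ with $(\alpha,\beta) \in [-1,1]^{p_1+p_2}$; concatenating $\alpha$ and $\beta$ into one factor vector of length $p_1+p_2$ and taking $\bar{\mathcal{E}}=\operatorname{diag}(\mathcal{E}_1,\mathcal{E}_2)$, the off-diagonal zero blocks contribute only trivial factors $t^0 = 1$, so the first $h_1$ columns of $[G_1,G_2]$ still carry the monomials $m_i^{(1)}$ and the last $h_2$ carry $m_j^{(2)}$. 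This yields the claimed formula with $h_1+h_2$ generators.

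The convex hull is the main computation. Here I would substitute the generic elements of $\mathcal{P}_1$ and $\mathcal{P}_2$ into $\tfrac{1+\gamma}{2}p_1 + \tfrac{1-\gamma}{2}p_2$, where $\gamma \in [-1,1]$ is the convex-combination parameter renamed to avoid clashing with the factors $\alpha$, and expand:
\begin{align*}
\tfrac12(c_1+c_2) + \tfrac{\gamma}{2}(c_1-c_2) &+ \tfrac12\sum_i m_i^{(1)} G_{1(\cdot,i)} + \tfrac{\gamma}{2}\sum_i m_i^{(1)} G_{1(\cdot,i)}\\
&+ \tfrac12\sum_j m_j^{(2)} G_{2(\cdot,j)} - \tfrac{\gamma}{2}\sum_j m_j^{(2)} G_{2(\cdot,j)}.
\end{align*}
Reading this against $\langle \tfrac12(c_1+c_2), \tfrac12[(c_1-c_2),G_1,G_1,G_2,-G_2], \hat{\mathcal{E}} \rangle_Z$ with $p = p_1+p_2+1$ factors organized as an $\alpha$-block, a $\beta$-block, and a final $\gamma$-row, one checks column by column that the lone column for $c_1-c_2$ carries exponent $1$ only in the $\gamma$-row, the two $G_1$-blocks carry $\mathcal{E}_1$ in the $\alpha$-rows and differ by $0$ versus $1$ in the $\gamma$-row, and symmetrically the two $\pm G_2$-blocks carry $\mathcal{E}_2$ in the $\beta$-rows and $0$ versus $1$ in the $\gamma$-row; this is exactly the displayed block structure of $\hat{\mathcal{E}}$, and the column count is $1+2h_1+2h_2$.

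The part that needs care --- rather than a genuine obstacle --- is that the substitution must be run in both directions so that the two set inclusions hold simultaneously: conversely, given any $(\alpha,\beta,\gamma)$ in the parameter box one sets $p_1 := c_1 + \sum_i m_i^{(1)} G_{1(\cdot,i)} \in \mathcal{P}_1$ and $p_2 := c_2 + \sum_j m_j^{(2)} G_{2(\cdot,j)} \in \mathcal{P}_2$, and the corresponding Z-representation point is then $\tfrac{1+\gamma}{2}p_1 + \tfrac{1-\gamma}{2}p_2$; the analogous observation closes the Minkowski-sum case. The only place a slip could occur is in aligning the row-blocks of $\hat{\mathcal{E}}$ with the generator blocks and in making sure the two factor families stay independent after concatenation, so I would fix the row/column indexing explicitly before matching coefficients.
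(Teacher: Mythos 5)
Your verification is correct and complete: the linear map and Minkowski sum cases are immediate, and your expansion of $\tfrac{1+\gamma}{2}p_1 + \tfrac{1-\gamma}{2}p_2$ matches the five generator blocks and the block structure of $\hat{\mathcal{E}}$ column by column, with the two-directional substitution argument closing both set inclusions. Note that the paper itself gives no proof of this proposition --- it is imported verbatim from the cited reference \cite{Kochdumper} as a preliminary --- so there is nothing internal to compare against; your direct computation is the standard argument and is exactly what one would expect that reference to contain.
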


For our M-representation we need another definition:

\begin{definition}[Multilinear map]
    A multivariate map $f: \mathbb{R}^h \to \mathbb{R}^d$ is called \textit{multilinear} if it is linear in every variable.
\end{definition}

In the next section, we introduce our novel M-representation.

\section{M-Representation} \label{secsrep}

By limiting the intervals of the factors of the Z-representation to $[0, 1]$ we obtain the M-representation. This subtle change has far-reaching consequences and combines the computational advantages of the Z-representation with the low representation size of the V-representation.

\begin{definition}[M-representation] \label{defsrep}
Let $s \in \mathbb{R}^d$ be a starting point, $B \in \mathbb{R}^{d \times h}$ a matrix of basis vectors, and $\mathcal{E} \in \{0, 1\}^{p \times h}$ a matrix of exponents, then the \textit{multilinear vertex representation (M-representation)} of a polytope $\mathcal{P} \subseteq \mathbb{R}^d$ is defined as

\begin{equation}
    \mathcal{P} = \Big\{s + \sum_{i=1}^h(\prod_{k=1}^p\alpha_k^{\mathcal{E}_{(k,i)}})B_{(\cdot,i)} \textrm{ } \Big| \textrm{ } \alpha \in [0, 1]^p \Big\}
\end{equation}

and we write

\begin{equation}
    \mathcal{P} = \langle s, B, \mathcal{E} \rangle_M.
\end{equation}

The M-representation of a single point $s$ therefore can be expressed by $\langle s, [ \textrm{ }], [ \textrm{ }] \rangle_M$.
\end{definition}








Now we introduce a theorem that provides a strategy to obtain an M-representation from a set of vertices. Furthermore, we prove how many basis vectors are at most required to represent a general polytope.

\begin{theorem}\label{thmsrep}
Let $v_1, v_2, \dots, v_n \in \mathbb{R}^d$ be the vertices of a polytope $\mathcal{P}$. 
\begin{itemize}
    \item[1.] We can express $\mathcal{P}$ in M-representation as 
\begin{equation}
    \mathcal{P} = \langle v_n, [v_1 - v_2, v_2 - v_3, \dots, v_{n-1} - v_n], L_{n-1}\rangle_M. 
\end{equation}

This representation can be obtained with complexity $\mathcal{O}(nd)$ and has a representation size in $\mathcal{O}(nd)$.
\item[2.] This representation has $h = n - 1$ basis vectors.
\end{itemize}
\end{theorem}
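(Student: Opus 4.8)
The plan is to prove the equality in item~1 by a change of variables that converts the nested products of the factors $\alpha_k$ into the barycentric coordinates of the V-representation; the complexity and size claims of item~1 and the count in item~2 then follow by inspection.

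First I would unfold the right-hand side. Since the $i$-th column of $L_{n-1}$ has entries equal to $1$ exactly in rows $i, i+1, \dots, n-1$, the monomial multiplying the basis vector $v_i - v_{i+1}$ is $\beta_i := \alpha_i\alpha_{i+1}\cdots\alpha_{n-1}$, so a generic element of $\langle v_n, [v_1-v_2,\dots,v_{n-1}-v_n], L_{n-1}\rangle_M$ is $v_n + \sum_{i=1}^{n-1}\beta_i(v_i-v_{i+1})$ with $\alpha\in[0,1]^{n-1}$. Next I would identify which tuples $(\beta_1,\dots,\beta_{n-1})$ occur: setting $\beta_n := 1$ one has $\beta_i = \alpha_i\beta_{i+1}$, so as $\alpha$ ranges over $[0,1]^{n-1}$ the tuple $(\beta_1,\dots,\beta_{n-1})$ ranges exactly over the chains $0 \le \beta_1 \le \beta_2 \le \dots \le \beta_{n-1} \le 1$. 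The inclusion ``$\subseteq$'' is immediate from $\alpha_i\in[0,1]$; for ``$\supseteq$'' one sets $\alpha_i = \beta_i/\beta_{i+1}$ whenever $\beta_{i+1}>0$ and chooses $\alpha_i$ freely below the first nonzero $\beta_i$, which I would spell out as a short case distinction.

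Then I would apply summation by parts to $\sum_{i=1}^{n-1}\beta_i(v_i-v_{i+1})$ and add the starting point, obtaining
\begin{equation}
    v_n + \sum_{i=1}^{n-1}\beta_i(v_i-v_{i+1}) = \beta_1 v_1 + \sum_{i=2}^{n-1}(\beta_i-\beta_{i-1})\,v_i + (1-\beta_{n-1})\,v_n .
\end{equation}
Writing $\gamma_1 = \beta_1$, $\gamma_i = \beta_i-\beta_{i-1}$ for $2\le i\le n-1$, and $\gamma_n = 1-\beta_{n-1}$, this is $\sum_{i=1}^n\gamma_i v_i$; the $\gamma_i$ telescope to $\sum_{i=1}^n\gamma_i = 1$, and the conditions $\gamma_i\ge 0$ for all $i$ are precisely the chain inequalities from the previous step. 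Combining the two steps, and noting that $(\beta_1,\dots,\beta_{n-1})\mapsto(\gamma_1,\dots,\gamma_n)$ is a bijection from the set of chains onto the probability simplex (its inverse being $\beta_i = \gamma_1+\dots+\gamma_i$), the M-representation set equals $\{\sum_{i=1}^n\gamma_i v_i \mid \gamma_i\ge 0,\ \sum_{i=1}^n\gamma_i = 1\}$, which is $\mathcal{P}$ by the definition of the V-representation.

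Finally, the construction only computes the $n-1$ differences $v_i-v_{i+1}\in\mathbb{R}^d$, each in time $\mathcal{O}(d)$, hence $\mathcal{O}(nd)$ overall; the data stored are the point $v_n$ and the $d\times(n-1)$ matrix $B$, i.e.\ $\mathcal{O}(nd)$ numbers, while the exponent matrix is the fixed structured matrix $L_{n-1}$ and adds nothing to the representation size. Item~2 is immediate since $B$ has exactly $n-1$ columns. The only genuinely delicate point in the whole argument is the equivalence between attainable $\beta$-tuples and chains when some $\beta_i$ vanish, so that is where I would be most careful.
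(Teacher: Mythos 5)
Your argument is correct, but it takes a genuinely different route from the paper. The paper proves the theorem by induction on $n$: it builds $\mathcal{P}_{n+1}$ as $conv(\mathcal{P}_n, v_{n+1})$, multiplies the existing representation by a fresh factor $\alpha_n$, and reads off the chain form and the count $h(n+1)=h(n)+1$ from the recursion $h(n)=h(n_1)+2h(n_2)+1$ with $n_1=n$, $n_2=1$. You instead give a direct, non-inductive proof: the substitution $\beta_i=\alpha_i\cdots\alpha_{n-1}$, the identification of the attainable $\beta$-tuples with the chains $0\le\beta_1\le\dots\le\beta_{n-1}\le 1$, and Abel summation producing the barycentric coordinates $\gamma_i$ with $\sum_i\gamma_i=1$, $\gamma_i\ge 0$. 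Both establish the set equality; I checked your monomial $\alpha_i\alpha_{i+1}\cdots\alpha_{n-1}$ against the column structure of $L_{n-1}$ and your surjectivity argument in the degenerate case $\beta_{i+1}=0$ (where $\beta_i=0$ is forced and $\alpha_i$ is free), and both are sound. What each approach buys: the paper's induction makes transparent \emph{why} the chain form is the natural output of iterated convex hulls with single points, which is the mechanism reused later for the convex hull of two chain forms and for the C-representation; your change of variables gives an explicit bijection between the parameters $\alpha$ (modulo the degeneracy you flag) and the simplex of barycentric coordinates, which immediately yields the paper's Corollary \ref{corvertices} on when two $\alpha$'s represent the same point and makes the vertex enumeration via the columns of $L_{n-1}$ obvious. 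Your handling of the complexity, representation size, and the count in item~2 matches the paper's.
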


We call this form the \textit{chain form}.

\begin{proof}
We prove the statements above by induction.\\

\textit{Induction start}: The M-representation of a polytope with a single vertex $v$ is $\mathcal{P} = \langle v, [ \textrm{ }], [ \textrm{ }] \rangle_M = v$ with zero basis vectors. We can compute the convex hull of two vertices $v_1, v_2$ as 
\begin{equation}
    conv(v_1, v_2) = \{\alpha v_1 + (1 - \alpha) v_2\}_{\alpha} = \{v_2 + \alpha (v_1 - v_2)\}_{\alpha}
\end{equation}
with one basis vector $v_1 - v_2$. Therefore, $h(1) = 0$ and $h(2) = 1$. For $n \leq 2$, these representations have the form described in the theorem.\\


\textit{Induction step}: Let $\mathcal{P}_1$ and $\mathcal{P}_2$ be two polytopes in M-representation. From 
\begin{equation}
    conv(\mathcal{P}_1, \mathcal{P}_2) = \{ \alpha \cdot p_1 + (1 - \alpha) \cdot p_2 \textrm{ }|\textrm{ } p_1 \in \mathcal{P}_1, \textrm{ } p_2 \in \mathcal{P}_2, \textrm{ } \alpha \in [0, 1]\}
\end{equation}
we know that $h(n) = h(n_1) + 2h(n_2) + 1$ with $n = n_1 + n_2$ and $n_i$ is the number of vertices of the polytopes being merged in this step.\\
With $h(m) = m - 1$ for every $m \in [n]$ and the claim of induction, it is obvious that a polytope with $n + 1$ vertices has the least number of generators if we choose $n_1 = n$ and $n_2 = 1$. Then we obtain $h(n + 1) = n - 1 + 2 \cdot 0 + 1 = n$, which proves the second part of the theorem. For the first part, we need the representation of a polytope $\mathcal{P}_{n}$ with $n$ vertices in order to compute the induction step. For this polytope we use 
\begin{equation} \label{sreppolyform}
    \mathcal{P}_{n} = \Big\{v_n + \sum_{i=1}^{n-1}\Big(\prod_{j=1}^{n-i} \alpha_{n-j}\Big) (v_i - v_{i+1})\Big\}_{\alpha}.
\end{equation}
For the representation of a polytope $\mathcal{P}_{n+1}$ with $n+1$ vertices we obtain
\begin{equation}
    \begin{split}
        \mathcal{P}_{n+1} & = conv(\mathcal{P}_{n}, v_{n+1}) =\\
        \textrm{ } & = \Big\{\alpha_n \Big(v_n + \sum_{i=1}^{n-1}\Big(\prod_{j=1}^{n-i} \alpha_{n-j}\Big) (v_i - v_{i+1})\Big) + (1 - \alpha_n) v_{n+1}\Big\}_{\alpha}=\\
        \textrm{ } & = \Big\{v_{n+1} + \sum_{i=1}^{n-1}\Big(\prod_{j=0}^{n-i} \alpha_{n-j}\Big) (v_i - v_{i+1}) + \alpha_n (v_n - v_{n+1})\Big\}_{\alpha}=\\
        \textrm{ } & = \Big\{v_{n+1} + \sum_{i=1}^{n}\Big(\prod_{j=0}^{n-i} \alpha_{n-j}\Big) (v_i - v_{i+1})\Big\}_{\alpha}=\\
        \textrm{ } & = \Big\{v_{n+1} + \sum_{i=1}^{n}\Big(\prod_{k=1}^{n+1-i} \alpha_{n+1-k}\Big) (v_i - v_{i+1})\Big\}_{\alpha}=\\
        \textrm{ } & = \langle v_{n+1}, \textrm{ } [v_1 - v_2, v_2 - v_3, \dots, v_n - v_{n+1}], \textrm{ } L_n\rangle_M. 
    \end{split}
\end{equation}
In order to obtain this representation, $nd$ subtractions are necessary, therefore the complexity is in $\mathcal{O}(nd)$. For the representation size we only need to save the matrix of basis vectors and the index of the lower triangular matrix which leads to a complexity of $\mathcal{O}(nd)$. This proves the first part of the theorem.
\end{proof}



Let us have a look at why Theorem \ref{thmsrep} cannot be adapted for the Z-representation:

\begin{corollary}
A Z-representation of a polytope of the following form is always a point symmetric polytope:

\begin{equation} \label{zrepformzono}
    \mathcal{P} = \langle c, [v_1, v_2, \dots, v_{n-1}], L_{n-1} \rangle_Z. 
\end{equation}

\end{corollary}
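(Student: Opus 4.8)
The plan is to unpack the Z-representation and exploit the fact that the bottom row of $L_{n-1}$ consists entirely of ones. First I would write the set out explicitly: since column $i$ of $L_{n-1}$ carries a $1$ in rows $i, i+1, \dots, n-1$ and a $0$ elsewhere, the monomial attached to the generator $v_i$ is $\prod_{k=1}^{n-1}\alpha_k^{(L_{n-1})_{(k,i)}} = \alpha_i\alpha_{i+1}\cdots\alpha_{n-1}$, so that
\[
\mathcal{P} = \Big\{\, c + \sum_{i=1}^{n-1}\Big(\prod_{k=i}^{n-1}\alpha_k\Big) v_i \;\Big|\; \alpha \in [-1,1]^{n-1} \,\Big\}.
\]

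The key observation is that the index $k = n-1$ appears in \emph{every} one of these products, precisely because the last row of $L_{n-1}$ is $(1,1,\dots,1)$. I would therefore consider the involution $\sigma : [-1,1]^{n-1} \to [-1,1]^{n-1}$ negating the last coordinate, $\sigma(\alpha_1,\dots,\alpha_{n-2},\alpha_{n-1}) = (\alpha_1,\dots,\alpha_{n-2},-\alpha_{n-1})$. This is a bijection of the cube $[-1,1]^{n-1}$ onto itself, and applying it multiplies every monomial $\prod_{k=i}^{n-1}\alpha_k$ by $-1$ simultaneously. Hence, if $x = c + \sum_{i=1}^{n-1}\big(\prod_{k=i}^{n-1}\alpha_k\big)v_i$ is an arbitrary point of $\mathcal{P}$, the point associated with $\sigma(\alpha)$ is $c - \sum_{i=1}^{n-1}\big(\prod_{k=i}^{n-1}\alpha_k\big)v_i = 2c - x$, and this point again lies in $\mathcal{P}$. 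Consequently $x \in \mathcal{P} \iff 2c - x \in \mathcal{P}$, i.e. $\mathcal{P}$ is symmetric about the point $c$, which is the claim.

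There is essentially no hard step here; the whole argument rests on reading off the structure of $L_{n-1}$ correctly — in particular that its bottom row has no zeros — together with the remark that the cube $[-1,1]^{n-1}$ is closed under sign changes of a coordinate. This is exactly the feature that the simplex constraint $\sum\alpha_i = 1$, $\alpha_i \geq 0$ of the V-representation and the box $[0,1]^p$ of the M-representation do not share, which is why the analogue of Theorem \ref{thmsrep} cannot be carried over to the Z-representation. The only point that perhaps deserves an explicit line is that the centre of symmetry comes out to be precisely the centre $c$ of the Z-representation, and this is immediate from the computation above.
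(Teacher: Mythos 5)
Your proof is correct and uses the same key idea as the paper: since the bottom row of $L_{n-1}$ is all ones, the factor $\alpha_{n-1}$ divides every monomial, so negating it sends $c+w$ to $c-w$. The only (cosmetic) difference is that you apply this involution to arbitrary points of the cube $[-1,1]^{n-1}$, whereas the paper restricts attention to the vertices ($\alpha_i \in \{-1,1\}$) and pairs them up; your version is, if anything, slightly more self-contained.
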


\begin{proof}
Let $\mathcal{P}$ be defined as above. Then
\begin{equation} \label{zrepzonolikesrep}
    \mathcal{P} = \Big\{c + \sum_{i=1}^{n-1}\Big(\prod_{j=1}^{n-i} \alpha_{n-j}\Big) v_i \textrm{ } \Big| \textrm{ } \alpha \in [-1, 1]^{n-1} \Big\}.
\end{equation}
From the Z-representation we know that the vertices are the points of this set with $\alpha_i \in \{-1, 1\}$. 
Let $c + w$ be a vertex of $\mathcal{P}$ with $\alpha_{n-1} = 1$. Then by \eqref{zrepzonolikesrep} follows that $c - w$ is also a vertex of $\mathcal{P}$ if we only replace $\alpha_{n-1} = -1$. Therefore, each vertex has a partner which is point symmetric to the center $c$.
\end{proof}

From this we know that a general polytope with $n$ vertices cannot be represented by a Z-representation of the form in \eqref{zrepformzono} with $n-1$ generators. In Proposition \ref{srepconv} we prove another advantage of the M-representation over the Z-representation even if there exist representations of polytopes with the same number of generators/basis vectors.



The next corollary follows directly from the theorem above as well:

\begin{corollary} \label{corvertices}
Let $\mathcal{P}$ be a polytope of the form introduced in Theorem \ref{thmsrep}. Let $\alpha$ and $\alpha'$ represent two points in a polytope and let $k$ be maximal s.t. $\alpha_k = 0$. In case there is no such $k$, set $k=1$. Then $\alpha$ and $\alpha'$ represent the same point iff $\alpha_j = \alpha'_j$ for all $k \leq j \leq n-1$.
\end{corollary}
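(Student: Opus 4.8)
My plan is to reduce the statement to the elementary ``product coordinates'' that are already visible in the chain form of Theorem~\ref{thmsrep}, and then to recover those coordinates, hence the relevant $\alpha_j$, from the point. Writing $S(\alpha)$ for the point of $\mathcal{P}$ associated with $\alpha\in[0,1]^{n-1}$, equation~\eqref{sreppolyform} gives
\begin{equation*}
  S(\alpha)=v_n+\sum_{i=1}^{n-1}c_i(\alpha)\,(v_i-v_{i+1}),\qquad
  c_i(\alpha):=\prod_{j=1}^{n-i}\alpha_{n-j}=\alpha_i\alpha_{i+1}\cdots\alpha_{n-1},
\end{equation*}
with the convention $c_n(\alpha)=1$, so that $c_i(\alpha)=\alpha_i\,c_{i+1}(\alpha)$. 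The one structural fact I need is that $c_i(\alpha)=0$ precisely when some factor $\alpha_m$ with $m\in\{i,\dots,n-1\}$ vanishes, i.e.\ precisely when $i\le k$, where $k$ is the largest index with $\alpha_k=0$; if $\alpha$ has no zero coordinate then every $c_i(\alpha)\neq0$ and the convention $k=1$ simply means ``compare all coordinates''. Thus $(c_1(\alpha),\dots,c_{n-1}(\alpha))$ is zero exactly in positions $1,\dots,k$.

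For the ``if'' direction, assume $\alpha_j=\alpha'_j$ for all $k\le j\le n-1$ (in particular $\alpha'_k=\alpha_k=0$ when a zero exists). For $i\ge k$ the products $c_i(\alpha)$ and $c_i(\alpha')$ involve only indices $\ge i\ge k$ and therefore coincide; for $i<k$ both contain the factor $\alpha_k=\alpha'_k=0$ and therefore both vanish. Hence $c_i(\alpha)=c_i(\alpha')$ for every $i$, and the displayed formula yields $S(\alpha)=S(\alpha')$. (When $k=1$ the hypothesis just says $\alpha=\alpha'$.)

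For the ``only if'' direction, suppose $S(\alpha)=S(\alpha')$. The first step is to deduce $c_i(\alpha)=c_i(\alpha')$ for every $i$, and this is the step I expect to be the main obstacle: it rests on the coefficients in the displayed representation being uniquely determined by the point, i.e.\ on the difference vectors $v_1-v_2,\dots,v_{n-1}-v_n$ being linearly independent (equivalently, on the $v_i$ being affinely independent). For a general polytope this recovery can fail, so the statement is really meant in that setting (or read with the $v_i$ treated as independent, exactly as in the inductive proof of Theorem~\ref{thmsrep}), and I would make that hypothesis explicit. Granting $c_i(\alpha)=c_i(\alpha')$ for all $i$, I conclude by a downward recursion: $\alpha_{n-1}=c_{n-1}(\alpha)$, and whenever $c_{i+1}(\alpha)\neq0$ the identity $c_i=\alpha_i c_{i+1}$ gives $\alpha_i=c_i(\alpha)/c_{i+1}(\alpha)$, which pins $\alpha_i$ down and forces $\alpha_i=\alpha'_i$. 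By the structural fact, $c_{i+1}(\alpha)\neq0$ exactly for $i\ge k$ (and the recursion then correctly returns $\alpha_k=0$, since $c_k(\alpha)=0$ while $c_{k+1}(\alpha)\neq0$); for $i<k$ one has $c_i(\alpha)=c_i(\alpha')=0$, which imposes nothing, in agreement with the claim that $\alpha_1,\dots,\alpha_{k-1}$ are unconstrained. So $\alpha$ and $\alpha'$ agree on $k\le j\le n-1$, which closes the equivalence.
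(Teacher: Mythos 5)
The paper offers no proof of this corollary --- it is merely asserted to ``follow directly'' from Theorem~\ref{thmsrep} --- so there is nothing to compare your argument against step by step; your write-up is the natural formalization of what the authors presumably intend, and your ``if'' direction is correct exactly as given (it is also the only direction the paper actually uses, namely to reduce the vertex parameters to the columns of $L_{n-1}$ and the zero vector). More importantly, your reservation about the ``only if'' direction is not a technical scruple: the statement as printed is false without an affine-independence hypothesis on $v_1,\dots,v_n$. Concretely, for the unit square with $v_1=(0,0)$, $v_2=(1,0)$, $v_3=(1,1)$, $v_4=(0,1)$ in chain form, the point is $(\alpha_3-\alpha_1\alpha_2\alpha_3,\ 1-\alpha_2\alpha_3)$, and $\alpha=(1,0.5,1)$ and $\alpha'=(0,1,0.5)$ both give $(0.5,0.5)$ while agreeing in no coordinate, even though $k=1$. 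So the equivalence holds only when the difference vectors $v_i-v_{i+1}$ are linearly independent (forcing $n\le d+1$), or when ``represent the same point'' is read formally as equality of the coefficient vector $(c_1,\dots,c_{n-1})$; you identify both readings, and your downward recursion $\alpha_i=c_i(\alpha)/c_{i+1}(\alpha)$ for $i\ge k$ is the right way to finish under either of them. The only cosmetic slip is the sentence claiming $(c_1,\dots,c_{n-1})$ vanishes exactly in positions $1,\dots,k$, which fails in the no-zero case ($k=1$ but $c_1\neq 0$); you treat that case separately anyway, so nothing breaks.
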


\begin{example}
    This means that for a polytope in the form of Theorem \ref{thmsrep} with three basis vectors, all $\alpha$'s of the form $\alpha = \begin{pmatrix}
        \alpha_1\\
        0\\
        1
        \end{pmatrix}$ represent the same point in the polytope.
\end{example}

For a polytope $\mathcal{P}$ in the form of Theorem \ref{thmsrep}, the vertices can be computed by all combinations of the $\alpha_i \in \{0, 1\}$.
With Corollary \ref{corvertices} the vertices of $\mathcal{P}$ can be represented by the $\alpha$'s of the form of the columns of $L_{n-1}$ and the zero vector. 
From Theorem \ref{thmsrep} it is clear that we can recover the vertices iteratively in $\mathcal{O}(nd)$ operations.




\begin{remark}
If we define the starting point of the M-representation as a basis vector as well, this representation has the same number of basis vectors for general polytopes as the V-representation has vertices. 
\end{remark}

\begin{proposition}
    Obtaining a chain form of a general polytope $\mathcal{P}$ in M-representation with $h$ basis vectors and $p$ factors can be done in $\mathcal{O}(2^phd)$.
\end{proposition}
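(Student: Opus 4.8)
The plan is to convert the given M-representation to a vertex list and then invoke Theorem~\ref{thmsrep}. Write $\mathcal{P} = \langle s, B, \mathcal{E}\rangle_M$ and let $\phi\colon [0,1]^p \to \mathbb{R}^d$ be the defining multilinear map $\phi(\alpha) = s + \sum_{i=1}^h\big(\prod_{k=1}^p\alpha_k^{\mathcal{E}_{(k,i)}}\big)B_{(\cdot,i)}$. First I would show that $\mathcal{P}$ equals the convex hull of the $2^p$ points obtained by evaluating $\phi$ at the corners $\{0,1\}^p$ of the unit cube. By linearity of $\phi$ in $\alpha_p$ we have $\phi(\alpha_1,\dots,\alpha_p) = (1-\alpha_p)\,\phi(\alpha_1,\dots,\alpha_{p-1},0) + \alpha_p\,\phi(\alpha_1,\dots,\alpha_{p-1},1)$, and since $\phi(\cdot,0)$ and $\phi(\cdot,1)$ are multilinear in $p-1$ variables, induction on $p$ gives $\phi([0,1]^p) \subseteq \mathrm{conv}\{\phi(\alpha)\mid \alpha\in\{0,1\}^p\}$; the reverse inclusion is immediate because each corner lies in $[0,1]^p$ and $\mathcal{P}$ is convex. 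Hence the (at most) $2^p$ corner images $w_1,\dots,w_N$, $N\le 2^p$, form a point set whose convex hull is $\mathcal{P}$.

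Second, I would hand $w_1, \dots, w_N$ to the construction of Theorem~\ref{thmsrep} and output $\langle w_N, [w_1 - w_2, \dots, w_{N-1} - w_N], L_{N-1}\rangle_M$. Although Theorem~\ref{thmsrep} is phrased for the vertices of $\mathcal{P}$, its proof only uses that $\mathcal{P}_{j+1} = \mathrm{conv}(\mathcal{P}_j, w_{j+1})$ at each step; if some $w_{j+1}$ already lies in $\mathcal{P}_j$ this leaves the set unchanged and the displayed formula still describes $\mathrm{conv}\{w_1,\dots,w_N\} = \mathcal{P}$. Thus the output is a chain form of $\mathcal{P}$ with at most $2^p - 1$ basis vectors (repeated $w_j$ may be dropped beforehand if desired).

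For the complexity, fix a corner $\alpha \in \{0,1\}^p$. The monomial $\prod_{k=1}^p \alpha_k^{\mathcal{E}_{(k,i)}}$ equals $1$ precisely when $\alpha_k = 1$ for every $k$ in the support of the column $\mathcal{E}_{(\cdot,i)}$, and $0$ otherwise, so $\phi(\alpha) = s$ plus the sum of the surviving columns of $B$; testing the $h$ columns and accumulating the surviving ones costs $\mathcal{O}(hd)$ per corner, the support tests over $\mathcal{E}$ being absorbed into the cost of touching each of the $h$ columns. Summed over the $2^p$ corners this is $\mathcal{O}(2^p h d)$, and the subsequent chain-form construction costs $\mathcal{O}(Nd) = \mathcal{O}(2^p d)$ by Theorem~\ref{thmsrep}, which is dominated; hence the total is $\mathcal{O}(2^p h d)$.

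The main obstacle is the first step: one must argue carefully that sampling the multilinear map $\phi$ only at the $2^p$ cube corners recovers all of $\mathcal{P}$. The induction on the number of factors $p$ makes this precise, and convexity of $\mathcal{P}$ — available here since $\mathcal{P}$ is a polytope — is indispensable, because the image of a box under a general multilinear map need not be convex. The only other point requiring care is the bookkeeping in the complexity count, so that the lookups in the exponent matrix $\mathcal{E}$ do not inflate the bound beyond $\mathcal{O}(2^p h d)$.
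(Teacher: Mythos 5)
Your proposal follows essentially the same route as the paper: evaluate the defining map at the $2^p$ corners of $[0,1]^p$ at cost $\mathcal{O}(hd)$ each, then feed the resulting points to the construction of Theorem~\ref{thmsrep}. The only difference is that you explicitly justify (via the multilinearity induction) that the corner images have $\mathcal{P}$ as their convex hull, a step the paper's proof leaves implicit by simply calling them the ``potential vertices''; this is a correct and welcome addition, not a different approach.
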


\begin{proof}
    In order to obtain a chain form of a polytope $\mathcal{P}$ in M-representation, we need to compute the $2^p$ potential vertices of $\mathcal{P}$. In general this can be done in $\mathcal{O}(2^phd)$. From Theorem \ref{thmsrep} we know that we can obtain the chain form of these vertices in $\mathcal{O}(2^pd)$.
\end{proof}

\section{Operations on Polytopes in M-Representation} \label{operationssrep}

In this section we present the linear transformation, Minkowski sum, and convex hull of polytopes in M-representation. 

\begin{theorem} \label{sreptranmink}
The M-representation of a polytope $\mathcal{P} = \langle s, B, \mathcal{E} \rangle_M$ in $\mathbb{R}^d$ with $h$ basis vectors directly inherits the efficient computation of linear transformations and Minkowski sums from the Z-representation. A linear transformation by $M \in \mathbb{R}^{m \times d}$ can be computed as 
\begin{equation}
    M \mathcal{P} = \langle Ms, MB, \mathcal{E} \rangle_M.
\end{equation}
This can be done in $\mathcal{O}(mdh)$.\\
The Minkowski sum of two polytopes $\mathcal{P}_1 = \langle s_1, B_1, \mathcal{E}_1 \rangle_M$ and $\mathcal{P}_2 = \langle s_2, B_2, \mathcal{E}_2 \rangle_M$ can be computed as
\begin{equation} \label{srepminkthm}
    \mathcal{P}_1 \oplus \mathcal{P}_2 = \Big\langle s_1+s_2, [B_1, B_2],
        \begin{pmatrix}
        \mathcal{E}_1 & \mathbb{O}\\
        \mathbb{O} & \mathcal{E}_2
        \end{pmatrix} \Big\rangle_M.
\end{equation}
This can be computed in $\mathcal{O}(d)$ and the representation size is in $\mathcal{O}\Big((h_1+h_2) \cdot max\{h_1+h_2, d\}\Big)$.\\
If $\mathcal{P}_1$ and $\mathcal{P}_2$ are in chain form, the matrix of exponents can be represented as $\begin{pmatrix}
        L_{h_1} & \mathbb{O}\\
        \mathbb{O} & L_{h_2}
        \end{pmatrix}$.
This has a representation size in $\mathcal{O}\Big((h_1+h_2) d\Big)$.
\end{theorem}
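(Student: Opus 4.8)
The plan is to prove the three claims — linear transformation, Minkowski sum, and chain-form specialization — in that order, since each follows by substituting the definition of the M-representation and tracking the resulting monomials.

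\textit{Linear transformation.} First I would insert the defining formula of $\mathcal{P}=\langle s,B,\mathcal{E}\rangle_M$ into $M\mathcal{P}=\{Mx \mid x\in\mathcal{P}\}$ and push $M$ through the finite sum; the scalar coefficients $\prod_{k=1}^p\alpha_k^{\mathcal{E}_{(k,i)}}$ are untouched, and the admissible set $[0,1]^p$ for $\alpha$ is unchanged, so $M\mathcal{P}=\langle Ms,MB,\mathcal{E}\rangle_M$ falls out immediately. For the cost I would note that $Ms$ takes $\mathcal{O}(md)$ and the product $MB$ of an $m\times d$ by a $d\times h$ matrix takes $\mathcal{O}(mdh)$, which dominates.

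\textit{Minkowski sum.} Next I would take a generic point of $\mathcal{P}_1$ with factors $\alpha\in[0,1]^{p_1}$ and a generic point of $\mathcal{P}_2$ with factors $\beta\in[0,1]^{p_2}$ and concatenate them to $\gamma=(\alpha,\beta)\in[0,1]^{p_1+p_2}$. The crux is to observe that under the block-diagonal exponent matrix of \eqref{srepminkthm} the monomial attached to each of the first $h_1$ columns of $[B_1,B_2]$ depends only on $\alpha$ (the rows corresponding to $\beta$ carry exponent $0$) and, symmetrically, the monomials on the last $h_2$ columns depend only on $\beta$; hence the sum over all $h_1+h_2$ columns separates as $(p_1-s_1)+(p_2-s_2)$, and since $\alpha$ and $\beta$ range independently over their cubes, $\gamma$ sweeps out exactly $\{p_1+p_2 \mid p_1\in\mathcal{P}_1,\ p_2\in\mathcal{P}_2\}$. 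That establishes the formula; the only arithmetic is $s_1+s_2$ (cost $\mathcal{O}(d)$), and the representation consists of the $d\times(h_1+h_2)$ basis matrix together with the $(p_1+p_2)\times(h_1+h_2)$ exponent matrix, whose sizes combine — using $p_i\le h_i$ — to $\mathcal{O}\big((h_1+h_2)\max\{h_1+h_2,d\}\big)$.

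\textit{Chain form.} For the last claim I would simply set $\mathcal{E}_i=L_{h_i}$, turning the exponent matrix into $\mathrm{blockdiag}(L_{h_1},L_{h_2})$; as with the chain form in Theorem \ref{thmsrep}, this matrix is pinned down by the single pair of indices $(h_1,h_2)$ and therefore needs only $\mathcal{O}(1)$ storage beyond them, leaving the basis matrix $[B_1,B_2]$ as the dominant term at $\mathcal{O}((h_1+h_2)d)$. I expect the Minkowski-sum step to be the only place requiring care: one must check that concatenating the factor vectors creates no spurious cross-monomials and that the restriction to each diagonal block is onto the corresponding summand. I would also make explicit the mild hypothesis $p_i\le h_i$ (no superfluous factors), which is automatic in chain form where $p_i=h_i$ and is what the representation-size bound relies on.
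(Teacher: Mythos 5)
Your proof is correct and takes essentially the same route as the paper: the paper's own proof simply states that the argument is identical to the Z-representation case in the cited reference, since substituting the definition and tracking the monomials is independent of whether the factors range over $[-1,1]$ or $[0,1]$, and you have written out exactly that verification (push $M$ through the sum; concatenate the factor vectors and observe that the block-diagonal exponent matrix produces no cross-monomials, so the sum separates). Your explicit note that the representation-size bound relies on $p_i \le h_i$, which holds automatically in chain form, is a useful precision that the paper leaves implicit.
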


\begin{proof}
The proof is identical to the one for the Z-representation in \cite{Kochdumper} since all operations are independent of the range of the factors $\alpha_i$. For the representation of polytopes in chain form we only need to save the matrix of basis vectors and a $2 \times 2$ matrix filled with the indices of the lower triangular matrices and the $\mathbb{O}$ symbol. Therefore the representation size is in $\mathcal{O}\Big((h_1+h_2) d\Big)$. 
\end{proof}

In \cite{Gritzmann} the number of vertices for the Minkowski sum of $k$ polytopes in $\mathbb{R}^d$ is discussed. If each of the polytopes has at most $n$ vertices, the total number of vertices is in $\mathcal{O}(k^{d-1}n^{2(d-1)})$.\\
From Theorem \ref{sreptranmink} follows that the number of basis vectors of $k$ polytopes is the sum of the number of basis vectors of each polytope. Let each polytope be represented by at most $n$ basis vectors, i.e. the polytope has at least $n$ vertices if we use the representation from Theorem \ref{thmsrep}. Then the number of basis vectors of the Minkowski sum of these polytopes is in $\mathcal{O}(kn)$.\\




Let $\mathcal{P}_{M1}$ and $\mathcal{P}_{M2}$ be two polytopes in M-representation with $h_M$ basis vectors each, $\mathcal{P}_{Z1}$ and $\mathcal{P}_{Z2}$ be two polytopes in Z-representation with $h_Z$ generators each and $h_Z = h_M$. Then $conv(\mathcal{P}_{M1}, \mathcal{P}_{M2})$ needs $h_M$ basis vectors less than $conv(\mathcal{P}_{Z1}, \mathcal{P}_{Z2})$ needs generators:


\begin{proposition} \label{srepconv}
Let $\mathcal{P}_1 = \langle s_1, B_1, \mathcal{E}_1 \rangle_M$ and $\mathcal{P}_2 = \langle s_2, B_2, \mathcal{E}_2 \rangle_M$ be two polytopes in M-representation with $h_1 \geq h_2$ being the respective number of basis vectors. Then

\begin{equation}
    \begin{split}
        \mathcal{P} & = conv(\mathcal{P}_1, \mathcal{P}_2) = \langle s_2, [B_2, -B_2, B_1, s_1 - s_2], \mathcal{E} \rangle_M,
    \end{split}
\end{equation}

with

\begin{equation}
    \mathcal{E} = 
        \begin{pmatrix}
        \mathcal{E}_2 & \mathcal{E}_2 & \mathbb{O} & \mathbb{O}\\
        \mathbb{O} & \mathbb{O} & \mathcal{E}_1 & \mathbb{O}\\
        \mathbb{O} & \mathbb{I} & \mathbb{I} & 1
        \end{pmatrix}
\end{equation}

being a block matrix and $\mathcal{P}$ has $h = h_1 + 2h_2 + 1$ basis vectors.
The complexity of obtaining the convex hull of two polytopes in M-representation is in $\mathcal{O}(d)$ and the representation size is in $\mathcal{O}\Big(max\Big\{(h_1+2h_2+1)d, (h_1+h_2+1)^2\Big\}\Big)$.\\
If $\mathcal{P}_1$ and $\mathcal{P}_2$ are in chain form, the matrix of exponents can be represented as 

\begin{equation}
    \mathcal{E} = \begin{pmatrix}
        L_{h_2} & L_{h_2} & \mathbb{O} & \mathbb{O}\\
        \mathbb{O} & \mathbb{O} & L_{h_1} & \mathbb{O}\\
        \mathbb{O} & \mathbb{I} & \mathbb{I} & 1
        \end{pmatrix}.
\end{equation}

This has a representation size in $\mathcal{O}\Big((h_1+2h_2+1)d\Big)$.
\end{proposition}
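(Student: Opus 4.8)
The plan is to reproduce, \emph{mutatis mutandis}, the algebraic rearrangement behind the Z-representation convex hull in Proposition~\ref{propconvmink}, exploiting the fact that the factors of an M-representation range over $[0,1]$. First I would pass to the parametrisation of the convex hull used in the induction step of the proof of Theorem~\ref{thmsrep}, namely
\[ conv(\mathcal{P}_1, \mathcal{P}_2) = \{\, \beta p_1 + (1-\beta) p_2 \;\mid\; p_1 \in \mathcal{P}_1,\ p_2 \in \mathcal{P}_2,\ \beta \in [0,1] \,\}, \]
which is equivalent to \eqref{convzreptheory} via $\beta = (1+\alpha)/2$. Writing $p_1 = s_1 + \sum_{i=1}^{h_1}(\prod_k \alpha_k^{(\mathcal{E}_1)_{(k,i)}}) B_{1,(\cdot,i)}$ and $p_2 = s_2 + \sum_{j=1}^{h_2}(\prod_k \gamma_k^{(\mathcal{E}_2)_{(k,j)}}) B_{2,(\cdot,j)}$ with $\alpha \in [0,1]^{p_1}$, $\gamma \in [0,1]^{p_2}$, and collecting the starting points as $\beta s_1 + (1-\beta) s_2 = s_2 + \beta(s_1 - s_2)$, a generic point of the convex hull takes the form $s_2 + (1-\beta)\sum_j m_j B_{2,(\cdot,j)} + \beta\sum_i n_i B_{1,(\cdot,i)} + \beta(s_1 - s_2)$, where $m_j := \prod_k \gamma_k^{(\mathcal{E}_2)_{(k,j)}}$ and $n_i := \prod_k \alpha_k^{(\mathcal{E}_1)_{(k,i)}}$.

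The crucial point is the realisation of the coefficient $1-\beta$: since only factors valued in $[0,1]$ are available, I would split $(1-\beta) m_j B_{2,(\cdot,j)} = m_j B_{2,(\cdot,j)} + \beta\, m_j (-B_{2,(\cdot,j)})$, i.e. pair each column of $B_2$ with a copy of $-B_2$ that additionally carries the new factor $\beta$. The block $B_1$ needs no such doubling, because $\beta$ already lies in $[0,1]$; this is precisely why, under the hypothesis $h_1 \ge h_2$, it is $B_2$ (the shorter block) that gets doubled, yielding $h = h_1 + 2h_2 + 1$ rather than $2h_1 + h_2 + 1$. Appending $\beta$ as a single fresh factor (the last row of $\mathcal{E}$) and keeping the $p_2$ factors of $\mathcal{P}_2$ in the first rows and the $p_1$ factors of $\mathcal{P}_1$ in the next rows, I would read off exactly the claimed block matrix $\mathcal{E}$: the first column block ($\mathcal{E}_2$ on top, zeros below) contributes $+B_2$ without $\beta$; the second block ($\mathcal{E}_2$, zeros, then ones) contributes $-B_2$ weighted by $\beta$; the third block (zeros, $\mathcal{E}_1$, ones) contributes $B_1$ weighted by $\beta$; and the last column (zeros, zeros, $1$) contributes $s_1 - s_2$ weighted by $\beta$. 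Re-collecting, the point equals $(1-\beta)\big(s_2 + \sum_j m_j B_{2,(\cdot,j)}\big) + \beta\big(s_1 + \sum_i n_i B_{1,(\cdot,i)}\big)$, so the two sets coincide; set equality follows by inspection from the dictionary $\beta \leftrightarrow$ (last new factor), (factors of $p_2$) $\leftrightarrow$ (first $p_2$ new factors), (factors of $p_1$) $\leftrightarrow$ (next $p_1$ new factors), both maps ranging over the same box $[0,1]^{p_1+p_2+1}$.

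The counting is then immediate: $h = h_2 + h_2 + h_1 + 1 = h_1 + 2h_2 + 1$ basis vectors and $p = p_1 + p_2 + 1$ factors. The only genuine arithmetic is the single subtraction $s_1 - s_2$, costing $\mathcal{O}(d)$; assembling $[B_2, -B_2, B_1, s_1 - s_2]$ and the block matrix $\mathcal{E}$ is pure bookkeeping accounted for in the representation size. That size is $\mathcal{O}\big((h_1+2h_2+1)d\big)$ for the basis matrix and $\mathcal{O}\big((p_1+p_2+1)(h_1+2h_2+1)\big)$ for $\mathcal{E}$; using $p_i \le h_i$ (which holds for chain forms, where $p_i = h_i$, and is preserved by all operations of this paper) the latter is $\mathcal{O}\big((h_1+h_2+1)^2\big)$, giving the stated maximum. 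In chain form one substitutes $\mathcal{E}_i = L_{h_i}$, and the resulting $\mathcal{E}$ is completely determined by the two indices $h_1, h_2$ together with the fixed block pattern of $L$'s, $\mathbb{O}$'s, $\mathbb{I}$'s and the single $1$; storing it therefore costs $\mathcal{O}(1)$ and the overall representation size drops to $\mathcal{O}\big((h_1+2h_2+1)d\big)$.

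I expect the only delicate points to be getting the dimensions of the $\mathbb{O}$- and $\mathbb{I}$-blocks in $\mathcal{E}$ mutually consistent — in particular verifying that $\beta$ attaches to the $-B_2$, $B_1$ and $(s_1-s_2)$ parts but \emph{not} to the bare $+B_2$ part — and the mild point of justifying the $\mathcal{O}(d)$ running time, i.e. charging the negation and concatenation of already-stored matrices to the representation size rather than to the running time. The rest is routine manipulation parallel to the Z-representation proof.
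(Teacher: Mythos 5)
Your proposal is correct and follows essentially the same route as the paper's proof: both expand $\beta p_1+(1-\beta)p_2$ and realise the coefficient $1-\beta$ by splitting each $B_2$-term into a bare copy plus a $\beta$-weighted copy of $-B_2$, then read off the block exponent matrix and the $\mathcal{O}(d)$ cost of the single subtraction $s_1-s_2$. The only differences are cosmetic: the paper carries out the computation for polytopes written in chain form, whereas you work with general exponent matrices and make explicit the assumption $p_i \le h_i$ needed for the $(h_1+h_2+1)^2$ term of the size bound, which the paper leaves implicit.
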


\begin{proof}
We write two polytopes $\mathcal{P}_1 = \langle v_n, B_1, \mathcal{E}_1 \rangle_M$ and $\mathcal{P}_2 = \langle w_m, B_2, \mathcal{E}_2 \rangle_M$ as $\mathcal{P}_1 = \Big\{v_n + \sum_{i=1}^{n-1}\Big(\prod_{j=1}^{n-i} \alpha_{n-j}\Big) (v_i - v_{i+1})\Big\}_{\alpha}$ and $\mathcal{P}_2 = \Big\{w_m + \sum_{i=1}^{m-1}\Big(\prod_{j=1}^{m-i} \alpha_{m-j}\Big) (w_i - w_{i+1})\Big\}_{\alpha}$ with $n-1$ and $m-1$ basis vectors, respectively. Then

\begin{equation}
    \begin{split}
        \mathcal{P} & = conv(\mathcal{P}_1, \mathcal{P}_2) =\\
        \textrm{ } & = \Big\{\alpha_{n+m-1} \Big(v_n + \sum_{i=1}^{n-1}\Big(\prod_{j=1}^{n-i} \alpha_{n-j}\Big) (v_i - v_{i+1})\Big) +\\
        \textrm{ } & + (1 - \alpha_{n+m-1}) \Big(w_m + \sum_{i=1}^{m-1}\Big(\prod_{j=1}^{m-i} \alpha_{m-j}\Big) (w_i - w_{i+1})\Big)\Big\}_{\alpha}=\\
        \textrm{ } & = \Big\{w_m + \sum_{i=1}^{m-1}\Big(\prod_{j=1}^{m-i} \alpha_{m-j}\Big) (w_i - w_{i+1}) + \alpha_{n+m-1} \Big(-\sum_{i=1}^{m-1}\Big(\prod_{j=1}^{m-i} \alpha_{m-j}\Big) (w_i - w_{i+1})\Big) +\\
        \textrm{ } & + \alpha_{n+m-1} \Big(\sum_{i=1}^{n-1}\Big(\prod_{j=1}^{n-i} \alpha_{n-j}\Big) (v_i - v_{i+1})\Big) + \alpha_{n+m-1} (v_n - w_m)\Big\}_{\alpha}=\\
        \textrm{ } & = \langle w_m, [G_2, -G_2, G_1, v_n - w_m], \mathcal{E} \rangle_M
    \end{split}
\end{equation}
with $\mathcal{E}$ as defined above.\\
For the representation of polytopes in chain form we only need to save the matrix of basis vectors and a $3 \times 4$ matrix filled with the indices of the lower triangular matrices, the symbols $\mathbb{O}$ and $\mathbb{I}$ and a 1. Therefore the representation size is in $\mathcal{O}\Big((h_1+2h_2+1)d\Big)$.
\end{proof}

A more compact representation of $conv(\mathcal{P}_1, \mathcal{P}_2)$ in terms of number of basis vectors can be obtained by computing the vertices of $\mathcal{P}_1$ and $\mathcal{P}_2$, deleting the ones that are not vertices of the convex hull of $\mathcal{P}_1$ and $\mathcal{P}_2$, and using the strategy from Theorem \ref{thmsrep} to obtain the M-representation of the remaining vertices. This method would result in a maximum of $h = n_1 + n_2 - 1 = h_1 + h_2 + 1$ basis vectors. However, the computational complexity of deleting the vertices that are not vertices of the convex hull has exponential complexity in the number $d$ of dimensions, similar to the computation of the convex hull of two polytopes in V-representation \cite{Tiwary}. In order to reduce the representation size of the  convex hull for polytopes in chain form, we introduce a variant of the M-representation in Sec. \ref{repsize}.

\section{Algorithm for Reducing the Number of Basis Vectors in M-Representation} \label{algreducedsrep}

Now we want to introduce an algorithm that returns an M-representation with at most $n-1$ basis vectors for a polytope with $n$ vertices. In case the vertices of the polytope fulfill certain condidtions, this algorithm returns less than $n-1$ basis vectors. Let us have a closer look at zonotopes in M-representation for this. From \cite{Gritzmann} we know that zonotopes are Minkowski sums of line segments. We can use this for the M-representation of zonotopes.



\begin{proposition} \label{srepzonomethod}
Let $\mathcal{Z}$ be an $m$-dimensional zonotope in $\mathbb{R}^d$ with $n$ vertices which is spanned by the Minkowski sum of $h$ line segments. Let the line segments be of the form $[l_{i1}, l_{i2}]$ with $i \in [h]$, where $l_{i1}$ is the starting point and $l_{i2}$ is the end point of this line segment. Then we can express $\mathcal{Z}$ as
\begin{equation*}
    \mathcal{Z} = \Big\langle \sum_{i=1}^h l_{i1}, [l_{12} - l_{11}, l_{22} - l_{21}, \dots, l_{h2} - l_{h1}], I_h \Big\rangle_M.
\end{equation*}
This representation uses $h$ basis vectors.
\end{proposition}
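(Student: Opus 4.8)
The plan is to build the representation segment by segment and then invoke the Minkowski-sum formula for the M-representation. First I would observe that a single line segment $[l_{i1}, l_{i2}]$ has the M-representation $\langle l_{i1}, [l_{i2} - l_{i1}], [1] \rangle_M$: by Definition \ref{defsrep} this set equals $\{ l_{i1} + \alpha_i (l_{i2} - l_{i1}) \mid \alpha_i \in [0,1] \}$, which traces out exactly the segment from $l_{i1}$ to $l_{i2}$ as $\alpha_i$ ranges over $[0,1]$. Here the exponent matrix is the $1\times 1$ matrix $[1]$, with a single factor.

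Next, since by hypothesis $\mathcal{Z} = [l_{11}, l_{12}] \oplus \cdots \oplus [l_{h1}, l_{h2}]$, I would apply the Minkowski-sum formula \eqref{srepminkthm} of Theorem \ref{sreptranmink} repeatedly, arguing by induction on $h$. Each application adds the starting points, concatenates the basis matrices, and stacks the exponent matrices block-diagonally. After combining all $h$ segments, the starting point is $\sum_{i=1}^h l_{i1}$, the basis matrix is $[l_{12} - l_{11}, l_{22} - l_{21}, \dots, l_{h2} - l_{h1}]$, and the exponent matrix is the block-diagonal matrix whose $h$ blocks are each the $1\times 1$ matrix $[1]$, i.e. $I_h \in \{0,1\}^{h\times h}$. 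This is precisely the claimed representation, and it uses $h$ basis vectors, one per line segment. As a sanity check, the exponent matrix being exactly $I_h$ agrees with Definition \ref{zonotope}, confirming that the set really is a zonotope.

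There is essentially no hard step; the only point requiring care is that the factors belonging to different segments stay disjoint, so that the product $\prod_{k=1}^p \alpha_k^{\mathcal{E}_{(k,i)}}$ collapses to the single factor $\alpha_i$ for the $i$-th basis vector. This disjointness is exactly what the block-diagonal construction in \eqref{srepminkthm} enforces, so no factor is shared between two segments. Alternatively, one can bypass the induction entirely and give a direct argument: write $\mathcal{Z} = \big\{ \sum_{i=1}^h \big( l_{i1} + \alpha_i (l_{i2} - l_{i1}) \big) \;\big|\; \alpha \in [0,1]^h \big\}$ and regroup the terms as $\sum_{i=1}^h l_{i1} + \sum_{i=1}^h \alpha_i (l_{i2} - l_{i1})$, which is literally the form of Definition \ref{defsrep} with starting point $\sum_{i=1}^h l_{i1}$, basis vectors $l_{i2} - l_{i1}$, and $\mathcal{E} = I_h$.
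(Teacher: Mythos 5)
Your proof is correct and follows essentially the same route as the paper: represent each segment as $\langle l_{i1}, [l_{i2}-l_{i1}], 1\rangle_M$ and apply the Minkowski-sum formula of Theorem \ref{sreptranmink} $h-1$ times. The extra direct regrouping argument you add at the end is a nice sanity check but not a different method.
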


\begin{proof}
    We can represent each line segment $l_i$ by 
    \begin{equation}
        l_i = \langle l_{i1}, [l_{i2} - l_{i1}], 1 \rangle_M.
    \end{equation}
    By applying Theorem \ref{sreptranmink} $h-1$ times, we obtain the stated result with $h$ basis vectors.
\end{proof}


\begin{lemma} \label{lemmasrepzono}
Let $\mathcal{Z}$ be an $m$-dimensional zonotope in $\mathbb{R}^d$ with $n$ vertices and $m, h \neq 0$. Then we can represent $\mathcal{Z}$ by at most $h\leq \frac{n}{2}$ pairwise distinct basis vectors in M-representation.
\end{lemma}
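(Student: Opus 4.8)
The plan is to start from the representation with $h$ basis vectors given by Proposition \ref{srepzonomethod}, merge the basis vectors that are parallel to one another, and then bound the number of surviving, pairwise non-parallel basis vectors by $n/2$ via a vertex count.

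First I would write $\mathcal{Z} = \big(\sum_{i=1}^{h} l_{i1}\big) \oplus \bigoplus_{i=1}^{h}[0,g_i]$ with $g_i := l_{i2}-l_{i1}$, drop the degenerate summands (those with $g_i=0$), and partition the remaining directions into classes of mutually parallel vectors. Inside a class consisting of the multiples $c_1 u,\dots,c_k u$ of a unit vector $u$, the Minkowski sum $\bigoplus_{j=1}^{k}[0,c_j u]$ is the segment from $(\sum_{c_j<0}c_j)u$ to $(\sum_{c_j>0}c_j)u$; since $m\neq 0$ at least one $g_i$ is nonzero, so every nonempty class yields a nondegenerate segment, which has the one-basis-vector M-representation $\big\langle (\sum_{c_j<0}c_j)u,\ [(\sum_{j}|c_j|)u],\ 1\big\rangle_M$. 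Combining the classes by repeated applications of Theorem \ref{sreptranmink} produces an M-representation of $\mathcal{Z}$ with $h'$ basis vectors that are pairwise non-parallel, hence in particular pairwise distinct; it is this $h'$ that we must bound by $n/2$.

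Passing to the affine hull of $\mathcal{Z}$, we may work in the direction space $W=\mathrm{span}(g_1,\dots,g_{h'})$, which has dimension $m$. If $m=1$ then every nonzero vector of $W$ is parallel to every other, so $h'=1$, while $\mathcal{Z}$ is a segment and hence $n=2$; thus $h'=1=n/2$. If $m\ge 2$, consider inside $W\cong\mathbb{R}^m$ the central hyperplane arrangement $\mathcal{A}=\{\,g_i^{\perp}\cap W : i\in[h']\,\}$; the $h'$ hyperplanes are pairwise distinct because the $g_i$ are pairwise non-parallel. Choosing a direction $c$ inside a chamber (a connected component of $W\setminus\bigcup\mathcal{A}$), the functional $x\mapsto\langle c,x\rangle$ attains its maximum on $\mathcal{Z}$ at the unique point $\sum_{\langle c,g_i\rangle>0} g_i$ (up to translation), which is therefore a vertex; since the sign pattern $(\mathrm{sign}\langle c,g_i\rangle)_i$ is constant on a chamber and flips across every wall $g_j^{\perp}$, distinct chambers give distinct sign patterns, and a short sign argument shows distinct patterns give distinct vertices. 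Hence $n$ is at least the number of chambers of $\mathcal{A}$. Finally, a central arrangement of $h'$ distinct hyperplanes in $\mathbb{R}^m$, $m\ge 2$, has at least $2h'$ chambers: the first hyperplane produces $2$, and by deletion--restriction each further hyperplane $H$ adds as many chambers as the restricted arrangement $\mathcal{A}^{H}$ has, and $\mathcal{A}^{H}$ is a nonempty central arrangement in $H$ (any earlier hyperplane meets $H$ in a hyperplane of $H$), so it contributes at least $2$. Therefore $n\ge 2h'$, i.e. $h'\le n/2$.

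The step I expect to be the main obstacle is the vertex count for $m\ge 2$: relating the vertices of the zonotope to the chambers of the dual central arrangement and showing there are at least $2h'$ of them. I would either cite this from the standard theory of zonotopes and hyperplane arrangements, or keep the self-contained version sketched above, namely the deletion--restriction lower bound on chambers together with the verification that the assignment chamber $\mapsto$ maximizing vertex is injective. The remaining ingredients (Proposition \ref{srepzonomethod}, merging parallel segments, and Theorem \ref{sreptranmink}) are routine.
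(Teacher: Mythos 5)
Your argument is correct, but it takes a genuinely different route from the paper, which disposes of the lemma essentially by citation: after noting that the cases $m<2$ and $h<2$ are clear, the paper invokes Proposition 2.1.2 of \cite{Gritzmann} for the vertex count $n = 2\sum_{i=0}^{\min\{m,h\}-1}\binom{h-1}{i}$ of a zonotope with $h$ pairwise distinct basis vectors and truncates the sum after two terms to obtain $n \ge 2\binom{h-1}{0}+2\binom{h-1}{1}=2h$. You instead prove the key inequality from scratch: you first merge parallel generators (a reduction the paper leaves implicit), then identify the vertices of the resulting zonotope with the chambers of the dual central hyperplane arrangement and bound the number of chambers below by $2h'$ via deletion--restriction. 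What your version buys is self-containedness and, arguably, correctness of detail: the cited formula is an equality only for generators in general position, and for generators that are merely pairwise distinct but possibly parallel the bound $n\ge 2h$ can fail before merging (the generators $(1,0)$, $(2,0)$, $(0,1)$ in the plane give a rectangle with $n=4<6=2h$), so a reduction to pairwise non-parallel generators --- exactly your merging step --- is needed for the counting argument to apply; the merged vectors are in particular pairwise distinct, so they satisfy the lemma's conclusion. The two pieces you leave as sketches, namely the injectivity of the chamber-to-vertex assignment and the $\ge 2h'$ chamber count for a central arrangement of distinct hyperplanes in dimension $m\ge 2$, are standard and your outlines of both are sound.
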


\begin{proof}
For the cases with $m < 2$ and $h < 2$ this is clear. For all other cases we can use Proposition 2.1.2 in \cite{Gritzmann}: For an $m$-dimensional zonotope in $\mathbb{R}^d$ with $h$ pairwise distinct basis vectors in M-representation and $n$ vertices the following relation holds:

\begin{equation}\label{srepzonorep}
    n=2 \sum_{i=0}^{min\{m,h\}-1} \binom{h-1}{i}
\end{equation}

From this, we obtain the following inequality:

\begin{equation}
    n=2 \sum_{i=0}^{min\{m,h\}-1} \binom{h-1}{i} \geq 2 \binom{h-1}{0} + 2\binom{h-1}{1} = 2h
\end{equation}
\end{proof}

Hence, we can represent every zonotope by less than $n-1$ basis vectors for $n > 2$. We can use this to represent general polytopes, where a subset of the vertices forms a zonotope, by less than $n-1$ basis vectors. Alg. \ref{srepstrat} returns at most $n-1$ basis vectors for general polytopes with $n$ vertices.\\


\begin{algorithm}[H]
\caption{Algorithm for obtaining an M-representation} \label{srepstrat}
\textbf{Input:} Set $V$ containing $n$ vertices in $\mathbb{R}^d$\\
\textbf{Output:} M-representation of the polytope spanned by $V$ with at most $h = n-1$ basis vectors\\
  \eIf{$\exists S \subseteq V$ s.t. $S$ spans a zonotope, $|S|>2$ and $S$ maximal}{
    use Proposition \ref{srepzonomethod} on $S$ and use Proposition \ref{srepconv} to add the vertices in $V \setminus S$ one by one to the representation of $S$
  }{
      use Theorem \ref{thmsrep} on the vertex set $V$
}
\end{algorithm}
\textrm{ }\\
It is clear that this representation is multilinear again.\\
For checking whether a set is a zonotope we can use \cite[Alg. 3]{Deza}, which introduces an algorithm for checking whether a set of vertices forms a zonotope. This algorithm also returns the line segments which span the zonotope by their Minkowski sum. In order to be able to represent this polytope by an M-representation, we still need to apply Proposition \ref{srepzonomethod}. As candidates for such a set $S$ as described in Alg. \ref{srepstrat}, we only need to take sets into account that are point symmetric to a center as this is a necessary criterion for a set to be a zonotope.\\
Now we look at an example of an application of Alg. \ref{srepstrat} that reduces the number of basis vectors $h$ from 4 to 3 for 5 vertices.

\begin{example}
Let $\mathcal{P}$ be a polytope with the 5 vertices $\begin{pmatrix}
        0\\
        0
        \end{pmatrix}, \begin{pmatrix}
        0\\
        2
        \end{pmatrix}, \begin{pmatrix}
        2\\
        2
        \end{pmatrix}, \begin{pmatrix}
        2\\
        0
        \end{pmatrix}, \begin{pmatrix}
        1\\
        3
        \end{pmatrix}$. 
The first four vertices form a zonotope that can be represented by 
\begin{equation}
    \mathcal{P}' = \Big\{\begin{pmatrix}
        0\\
        0
        \end{pmatrix} + \alpha_1 \cdot \begin{pmatrix}
        0\\
        2
        \end{pmatrix} + \alpha_2 \cdot \begin{pmatrix}
        2\\
        0
        \end{pmatrix}\Big\}_{\alpha}.
\end{equation}
$\mathcal{P}$ can be written in M-representation as
\begin{equation}
\begin{split}
    \mathcal{P} & = \Big\{\alpha_3 \cdot \mathcal{P}' + (1 - \alpha_3) \cdot \begin{pmatrix}
        1\\
        3
        \end{pmatrix}\Big\}_{\alpha}\\
        & = \Big\{\begin{pmatrix}
        1\\
        3
        \end{pmatrix} + \alpha_3 \cdot \begin{pmatrix}
        -1\\
        -3
        \end{pmatrix} + \alpha_1 \alpha_3 \cdot \begin{pmatrix}
        0\\
        2
        \end{pmatrix} + \alpha_2 \alpha_3 \cdot \begin{pmatrix}
        2\\
        0
        \end{pmatrix}\Big\}_{\alpha},
\end{split}
\end{equation}

which has only 3 basis vectors.
\end{example}

\begin{example} \label{examplezono}
Let us have a look at the polytope shown in Fig. \ref{exzono}. In M-representation, this polytope could be represented by

\begin{equation}
    \begin{split}
        \mathcal{P} & = \Big\langle \begin{pmatrix}
        -2\\
        -1
        \end{pmatrix}, \Big[\begin{pmatrix}
        -2\\
        -2
        \end{pmatrix}, \begin{pmatrix}
        2\\
        0
        \end{pmatrix}, \begin{pmatrix}
        2\\
        2
        \end{pmatrix}\Big],
        \begin{pmatrix}
        1 0 0\\
        1 1 0\\
        1 1 1
        \end{pmatrix}
        \Big\rangle_M =\\
        \textrm{ } & = \Big\{\begin{pmatrix}
        -2\\
        -1
        \end{pmatrix} + \alpha_1 \alpha_2 \alpha_3 \cdot \begin{pmatrix}
        -2\\
        -2
        \end{pmatrix} + \alpha_2 \alpha_3 \cdot \begin{pmatrix}
        2\\
        0
        \end{pmatrix} + \alpha_3 \cdot \begin{pmatrix}
        2\\
        2
        \end{pmatrix}\Big\}_{\alpha}.
    \end{split}
\end{equation}

If we apply Alg. \ref{srepstrat}, we can even express it as

\begin{equation}
    \begin{split}
        \mathcal{P} & = \Big\langle \begin{pmatrix}
        -2\\
        -1
        \end{pmatrix}, \Big[\begin{pmatrix}
        2\\
        0
        \end{pmatrix}, \begin{pmatrix}
        2\\
        2
        \end{pmatrix}\Big],
        \begin{pmatrix}
        1 0\\
        0 1
        \end{pmatrix}
        \Big\rangle_M =\\
        \textrm{ } & = \Big\{\begin{pmatrix}
        -2\\
        -1
        \end{pmatrix} + \alpha_1 \cdot \begin{pmatrix}
        2\\
        0
        \end{pmatrix} + \alpha_2 \cdot \begin{pmatrix}
        2\\
        2
        \end{pmatrix}\Big\}_{\alpha}.
    \end{split}
\end{equation}
\end{example}

\section{Chain Representation of the Chain Form} \label{repsize}

Now we introduce a variant of the M-representation, which makes it possible to reduce the computational complexity of the convex hull of two polytopes in $\mathbb{R}^d$ with $h$ basis vectors each and in chain form to $\mathcal{O}(d)$:



\begin{definition} \label{thmcrepforconv}
    Let $s \in \mathbb{R}^d$ be a starting point, $B \in \mathbb{R}^{d \times h}$ a matrix of basis vectors, $\mathcal{E} = L_h$ a matrix of exponents and $e \in \mathbb{R}^d$ an end point, then a \textit{chain representation (C-representation)} of a polytope $\mathcal{P} \subseteq \mathbb{R}^d$ in chain form is defined as
    
    \begin{equation}
        \mathcal{P} = \Big\{s + \sum_{i=1}^h(\prod_{k=1}^p\alpha_k^{\mathcal{E}_{(k,i)}})B_{(\cdot,i)} \textrm{ } \Big| \textrm{ } \alpha \in [0, 1]^p \Big\}
    \end{equation}
    
    and we write
    
    \begin{equation}
        \mathcal{P} = \langle s, B, e \rangle_C.
    \end{equation}
\end{definition}

The basis vectors appearing in $B$ connect the starting point and the end point, which looks like a chain.
It is sufficient so save $s$, $B$ and $e$, since the exponent matrix in chain form is uniquely defined by the dimensions of $B$.
This variant has a representation size in $\mathcal{O}(hd)$. Saving the end point helps us for the next proposition:

\begin{proposition} \label{crepforconv}
Let $\mathcal{P}_1 = \langle s_1, B_1, e_1 \rangle_C$ and $\mathcal{P}_2 = \langle s_2, B_2, e_2 \rangle_C$ be two polytopes in $\mathbb{R}^d$ in C-representation with $h_1$ and $h_2$ being the respective number of basis vectors. Then

\begin{equation} \label{convchain}
    \begin{split}
        \mathcal{P} & = conv(\mathcal{P}_1, \mathcal{P}_2) = \langle s_1, [B_1, s_2 - e_1, B_2], e_2 \rangle_C
    \end{split}
\end{equation}

and $\mathcal{P}$ has $h = h_1 + h_2 + 1$ basis vectors.
The complexity of obtaining the convex hull of two polytopes in C-representation is $\mathcal{O}(d)$ and can be represented in $\mathcal{O}(hd)$.
\end{proposition}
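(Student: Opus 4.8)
The plan is to follow the strategy used for Proposition~\ref{srepconv}, but to exploit the stored end points so that the substitution requires no duplicated basis vectors. First I would write both arguments in chain form, using Definition~\ref{thmcrepforconv} together with the parametrisation derived in the proof of Theorem~\ref{thmsrep}: a generic point of $\mathcal{P}_1$ is $s_1+\sum_{i=1}^{h_1}\big(\prod_{k=i}^{h_1}\alpha_k\big)B_{1,(\cdot,i)}$ and a generic point of $\mathcal{P}_2$ is $s_2+\sum_{j=1}^{h_2}\big(\prod_{k=j}^{h_2}\beta_k\big)B_{2,(\cdot,j)}$. I would also record the telescoping identities $e_1=s_1+\sum_{i=1}^{h_1}B_{1,(\cdot,i)}$ and $e_2=s_2+\sum_{j=1}^{h_2}B_{2,(\cdot,j)}$, which hold because $e_\ell$ is, by Definition~\ref{thmcrepforconv}, the point obtained at the all-ones parameter vector; these identities are exactly why the end points were added to the representation.

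Next I would substitute both parametrisations into $conv(\mathcal{P}_1,\mathcal{P}_2)=\{\gamma\,p_1+(1-\gamma)\,p_2 \mid p_1\in\mathcal{P}_1,\ p_2\in\mathcal{P}_2,\ \gamma\in[0,1]\}$ and regroup everything into one chain. The crucial structural fact is that a chain has exponent matrix $L_h$, so the coefficient of a basis vector is the product of all factors sitting in that column and every column below it; hence a single new factor attached to the connecting basis vector $s_2-e_1$, inserted between the two blocks, propagates exactly the convex weight onto the whole block below it while leaving the other block unweighted. Using the telescoping identities, the $s_1$-, $e_1$- and $s_2$-terms cancel up to one constant vector that becomes the starting point of the new chain, and evaluating the result at the all-zeros and all-ones parameter vectors shows it runs from $s_1$ to $e_2$ with cascaded factors encoded precisely by $L_{h_1+h_2+1}$. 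This establishes both that the displayed expression is a valid C-representation and that it equals $conv(\mathcal{P}_1,\mathcal{P}_2)$.

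The remaining assertions are bookkeeping. The basis matrix $[B_1,\ s_2-e_1,\ B_2]$ is formed by concatenating $B_1$ and $B_2$ and adjoining the single $d$-vector $s_2-e_1$, which costs $\mathcal{O}(d)$, and it has $h_1+1+h_2$ columns, so $h=h_1+h_2+1$. Since the exponent matrix $L_h$ is implicit, only $s_1\in\mathbb{R}^d$, $B\in\mathbb{R}^{d\times h}$ and $e_2\in\mathbb{R}^d$ are stored, giving representation size $\mathcal{O}(hd)$.

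I expect the middle step to be the only genuine obstacle: one has to choose the position of the connecting column in the list and the orientation of the connecting vector so that the cascaded $L_h$-products multiply precisely the intended block by the convex weight, and then verify that the remaining, now partially shared, factors still allow $p_1$ and $p_2$ to range over all of $\mathcal{P}_1$ and $\mathcal{P}_2$, i.e.\ that the coupling between the convex weight and one of the sub-polytopes neither loses points of the convex hull nor creates points outside it. The orientation forced by this analysis is precisely what makes $e_1$ (not $s_1$) appear in the connecting vector, which is the payoff of storing the end point.
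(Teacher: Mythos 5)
Your plan is modeled on the substitution proof of Proposition~\ref{srepconv}, but that is not the paper's route, and the step you yourself flag as ``the only genuine obstacle'' is precisely where the substitution breaks down. In a chain the coefficient of column $i$ is $\prod_{k\ge i}\alpha_k$, so if you split the parameters of the combined chain into a block $\beta$ acting on $B_1$, the link factor $\delta$, and a block $\epsilon$ acting on $B_2$ (with the blocks ordered so that the telescoping comes out right), a generic point of \eqref{convchain} is
\begin{equation*}
  p_1(\beta)\;+\;\delta\Big(\textstyle\prod_{l=1}^{h_1}\beta_l\Big)\big(p_2(\epsilon)-e_1\big),
  \qquad p_1(\beta)\in\mathcal{P}_1,\ p_2(\epsilon)\in\mathcal{P}_2 .
\end{equation*}
The convex weight is therefore not a free parameter: it is forced to equal $\delta\prod_l\beta_l$ and is entangled with the parameters of one sub-chain. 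Hence the pointwise identity $\gamma p_1+(1-\gamma)p_2=(\text{chain point})$ with $\gamma,p_1,p_2$ independent does \emph{not} hold after regrouping -- this entanglement is exactly why Proposition~\ref{srepconv} must pay $2h_2$ duplicated basis vectors to make the direct substitution work. Equality in \eqref{convchain} holds only at the level of sets, and to close your argument you would still need a further fact (e.g.\ that the weight of $e_1$ in the convex combination $p_1(\beta)$ equals $\prod_l\beta_l$, so that $\delta\prod_l\beta_l\,e_1$ can be extracted from $p_1(\beta)$ without leaving $\mathcal{P}_1$, together with the converse inclusion). None of this appears in your sketch. The paper avoids the issue entirely with a vertex argument: by Theorem~\ref{thmsrep} and Corollary~\ref{corvertices} a chain form represents the convex hull of its partial sums $s+\sum_{i\ge l}B_{(\cdot,i)}$, and one checks that for the combined chain these partial sums are exactly the union of the vertex sets of $\mathcal{P}_1$ and $\mathcal{P}_2$ -- which is what storing $e_1$ buys.

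The orientation analysis you defer is also genuinely load-bearing. Since a chain is traversed from $s$ by switching on the \emph{last} column first, the block attached to the starting point $s_1$ must be the last block of the basis matrix: the partial sums return the two vertex sets for $\langle s_1,[B_2,\,s_2-e_1,\,B_1],e_2\rangle_C$ (equivalently $\langle s_2,[B_1,\,s_1-e_2,\,B_2],e_1\rangle_C$), whereas for $[B_1,\,s_2-e_1,\,B_2]$ as displayed the intermediate partial sums are of the form $s_1+(w_j-s_2)$ and $v_j+(e_2-e_1)$, which are not vertices of either polytope. So before completing either proof you must pin down this ordering explicitly rather than leave it as a step to be ``chosen'' later.
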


\begin{proof}
The set of vertices of the convex hull of $\mathcal{P}_1$ and $\mathcal{P}_2$ is a subset of the union of the vertices of these polytopes. It is clear from the definition of the chain form, that \eqref{convchain} represents a polytope. From Theorem \ref{thmsrep} also follows that the vertices of the polytope in \eqref{convchain} can be represented by $\alpha$'s of the form of the zero vector and the columns of a lower triangular matrix filled with ones with dimensions $(h_1+h_2+1) \times (h_1+h_2+1)$. The matrix of basis vectors is the connection of the original chains $B_1$ and $B_2$ by the link between the end point $e_1$ and the starting point $s_2$. This returns as vertices of the polytope $\mathcal{P}$ the union of the vertex sets of $\mathcal{P}_1$ and $\mathcal{P}_2$. Hence, \eqref{convchain} represents the required convex hull.
\end{proof}

\begin{proposition}
    The C-representation of a polytope $\mathcal{P} = \langle s, B, e \rangle_C$ in $\mathbb{R}^d$ with $h$ basis vectors directly inherits the efficient computation of linear transformations from the M-representation. A linear transformation by $M \in \mathbb{R}^{m \times d}$ can be computed as 
\begin{equation}
    M \mathcal{P} = \langle Ms, MB, Me \rangle_C.
\end{equation}
This can be done in $\mathcal{O}(mdh)$.
\end{proposition}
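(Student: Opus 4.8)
The plan is to read off from Definition~\ref{thmcrepforconv} that a C-representation $\langle s, B, e \rangle_C$ is exactly the M-representation $\langle s, B, L_h \rangle_M$, where $h$ is the number of columns of $B$, together with the redundant bookkeeping datum $e$. By the chain-form convention --- setting all $\alpha_k = 1$ in the defining sum, which is legitimate since $\mathcal{E} = L_h$ forces every monomial $\prod_{k=1}^{p}\alpha_k^{\mathcal{E}_{(k,i)}}$ to equal $1$ there --- this end point satisfies $e = s + \sum_{i=1}^h B_{(\cdot,i)}$. With this identification the claim reduces to two observations.

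First I would apply the linear-transformation part of Theorem~\ref{sreptranmink} to the underlying M-representation: $M\langle s, B, L_h \rangle_M = \langle Ms, MB, L_h \rangle_M$. Since $M$ acts only on $s$ and $B$ and leaves the exponent matrix untouched, the image is again in chain form --- same exponent matrix $L_h$, same number $h$ of basis vectors --- hence a legitimate C-representation. Second, I would verify that the stored end point is transported correctly: the end point of the transformed chain is $Ms + \sum_{i=1}^h (MB)_{(\cdot,i)} = M\bigl(s + \sum_{i=1}^h B_{(\cdot,i)}\bigr) = Me$ by linearity of $M$. Combining the two gives $M\mathcal{P} = \langle Ms, MB, Me \rangle_C$, as claimed.

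For the complexity bound I would simply count arithmetic operations: $Ms$ and $Me$ are matrix--vector products, each costing $\mathcal{O}(md)$, while $MB$ is the product of an $m \times d$ matrix with a $d \times h$ matrix, costing $\mathcal{O}(mdh)$; the total is $\mathcal{O}(mdh)$, identical to the M-representation bound in Theorem~\ref{sreptranmink}. I do not anticipate a genuine obstacle here: the only point worth spelling out is that $e$ is an affine-linear function of the data $(s, B)$, so that it is carried along correctly by $M$, and that the constraint $\mathcal{E} = L_h$ characterizing chain form is preserved --- both immediate. In essence the proposition is the M-representation result together with this remark about $e$.
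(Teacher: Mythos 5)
Your proof is correct and takes essentially the same route as the paper, which simply notes that the argument is identical to the linear-transformation proof for the Z-representation: the factors' range plays no role, so $M$ acts columnwise on $s$ and $B$ and the exponent matrix $L_h$ is untouched. You additionally verify that the stored end point satisfies $Me = M\bigl(s + \sum_{i=1}^h B_{(\cdot,i)}\bigr)$, a detail the paper's one-line proof leaves implicit but which is needed for the output to be a well-formed C-representation.
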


\begin{proof}
The proof is identical to the one for the Z-representation in \cite{Kochdumper}. 
\end{proof}

\newpage

\end{document}